\documentclass[12pt]{article}

\usepackage{graphicx}
\usepackage{enumerate}
\usepackage{natbib}

\usepackage{graphicx,bm,url,microtype}
\usepackage{paralist}
\usepackage{authblk}
\usepackage{amsfonts,amssymb,amsthm,amsmath, amsthm}

\usepackage[a4paper,text={16.5cm,25.2cm},centering]{geometry}
\usepackage[compact,small]{titlesec}


\newtheorem{teo}{{\sc\bf Theorem}}
\newtheorem{lem}{{\sc\bf Lemma}}
\newtheorem{rem}{{\sc\bf Remark}}

\newtheorem{defn}{{\sc\bf Definition}}
\newtheorem{cor}{{\sc\bf Corollary}}

\usepackage{amsmath,amsthm,mathrsfs,amsfonts}
\usepackage{ulem} 	
\usepackage{url}
\usepackage{amsfonts,todonotes}
\usepackage{amssymb}
\usepackage{bbm}
\usepackage{graphicx}
\usepackage{subfig}
\usepackage{float}

\newcommand{\eps}{\varepsilon}

\usepackage{multirow}

\usepackage{parskip}
\usepackage{booktabs}
\usepackage[shortlabels]{enumitem}
\usepackage{unicode}


\begin{document}

	\begin{center}
		\Large \bf Universally consistent estimation of the reach
	\end{center}
	\begin{center}
		Alejandro Cholaquidis $^1$, \hspace{.2cm}
		Ricardo Fraiman $^1$  \hspace{.2cm}  and \hspace{.2cm}  Leonardo Moreno$^2$ \\
		$^1$ Centro de Matemáticas, Facultad de Ciencias, Universidad de la República, Uruguay. \\
		$^2$ Instituto de Estadística, Departamento de Métodos Cuantitativos, FCEA, Universidad de la República, Uruguay.
	\end{center}

\begin{abstract}
	The reach of a set $M \subset \mathbb R^d$, also known as condition number when $M$ is a manifold, was introduced by Federer in 1959. The reach is a central concept in geometric measure theory, set estimation, manifold learning,  among others areas. 
	We introduce a universally consistent estimate of the reach, just assuming that the reach is positive. Under an additional assumption we provide rates of convergence. We also show that it is not possible to determine, based on a finite sample, if the reach of the support of a density is zero or not. We provide a small simulation study and a bias correction method for the case when $M$ is a manifold.
\end{abstract}

 \section{Introduction}

 The reach of a set  $M\subset \mathbb{R}^d$, denoted by $\textrm{reach}(M)$, is a key concept in geometric measure theory; see \cite{fed:69,rataj17,rataj01}, and the references therein. It is also of importance   in set estimation. We first focus on its relevance as a geometric concept, and then on the importance of shape constraints in set estimation, and in particular the positive reach condition.

 Is defined as the largest distance from which any point outside $M$ has a unique nearest point in $M$; see Figure \ref{fi1} (see also Definition \ref{reach1}). It can be proved that it is infinity for convex set. 
 Positive reach impose some regularity conditions on the set. For instance, its boundary has Lebesgue measure zero,  as it follows from the fact that the class of sets with positive reach is a subclass of the cone-convex sets (see \cite{cue12} and Proposition 2 in \cite{chola:14}). Moreover, the  volume (i.e., its $d$-dimensional Lebesgue measure) of the set of points at distance $t\leq \textrm{reach}(M)$ is a polynomial of degree $d$ on $t$ for all $t\in [0,\textrm{reach}(M)]$  (see \cite{federer:59}). If $\textrm{reach}(M)>0$, then it is also possible to define its second fundamental form; see \cite{fu}. The positivity of the reach of the boundary, $\partial M$, of $M$ allows its Minkowsky content to be defined (see \cite{amb08}), which is a notion of surface area suitable for estimation purposes (see \cite{cue13}). It have been proved (see for instance Proposition 6.1 in \cite{sma08}) that the reach is an upper bound for the inverse of the curvature of an arc-length parametrized geodesic.  In addition, as mentioned in \cite{aa21} ``it prevents quasi self-intersection at scales smaller than the reach'' (see Theorem 3.4 in \cite{aa19})

 \begin{figure}[ht]
 	\begin{center}
 		\includegraphics[height=3cm,width=4.8cm]{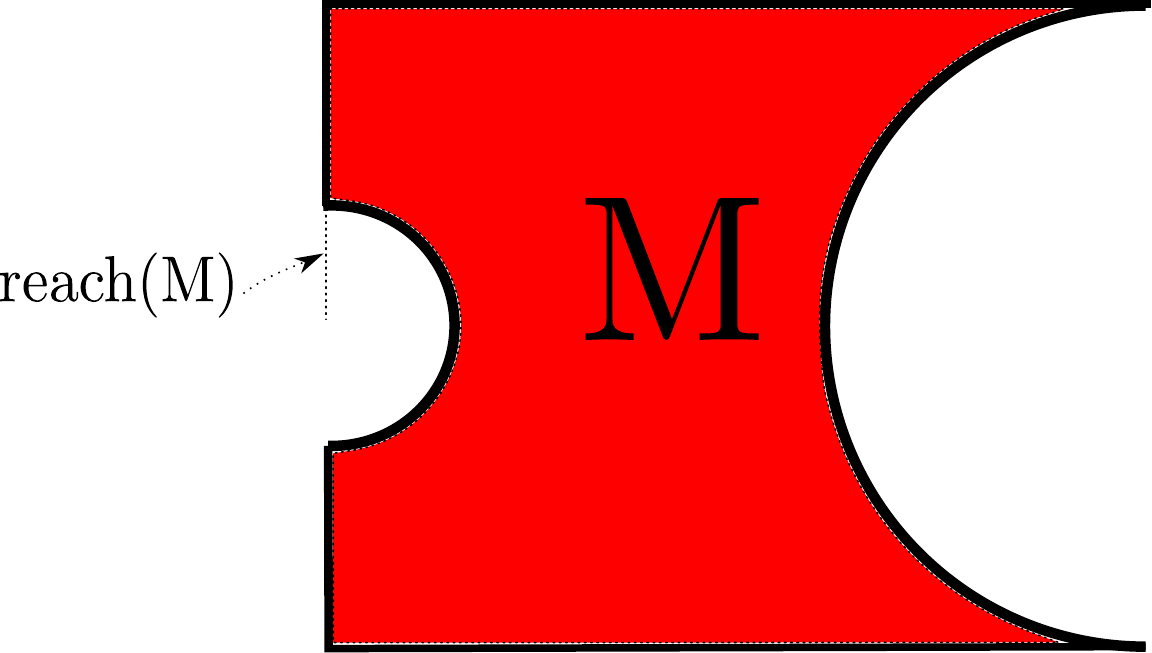}
 	\end{center}
 	\caption{The reach of a set $M$, $\textrm{reach}(M)$ is the largest distance form which there exists a unique nearest point on $M$.}
 	\label{fi1}
 \end{figure}

 When the set is a smooth manifold, the reach parameter is also known as ``condition number'' (see for instance \cite{sma08}).  It can also be proved that  $\mathcal{C}^2$ compact manifolds  with empty or $\mathcal{C}^2$ boundary, have positive reach; see \cite{tha08}. 
 
 The reach has gained importance in the last two decades in the areas of statistics known as set estimation, manifold learning and persistent homology (see for instance \cite{arias2020,chola16},  and \cite{ho19}, respectively). Given an unknown set $M \subset \mathbb R^d$ (not necessarily convex), set estimation deals with the problem of the estimation of $M$ from a random sample $\{X_1, \ldots, X_n\} \subset M$, as well as several geometrically important functional associated with $M$. For instance, its Lebesgue measure, the Minkowsky content of its boundary, among others.
 
 The starting and simplest problem is when $M$ is the support of a distribution.  This problem has been addressed by several authors where the ground-breaking Devroye-Wise estimator is a milestone (see  \cite{dw:80}), due to its universal consistency and simplicity. To obtain rates of convergence for this estimator, some geometric restrictions on the set are required (see \cite{crc:04}). The best attainable rate for this estimator is of the order $\mathcal{O}(\log(n)/n)^{1/d}$.  
 
 To improve the rates of convergence, it is necessary to impose stronger shape restrictions than the one required in \cite{crc:04} on the set. A well-known shape restriction is $r$-convexity. A set is said to be $r$-convex if it equals the complement of open balls of radius $r$ not meeting the set. $r$-convexity is one of the most studied shape restriction in set estimation (see for instance, \cite{pateiro:09,cas07,cas16}, and the references therein).  
 
 Positive reach is a stronger condition than $r$-convexity (see \cite{cue12}). Therefore, when a ball of positive radius rolls inside the set (i.e there exists $\lambda>0$ such that for all $x\in \partial M$,  there exists $y\in M$ such that $x\in \partial \mathcal{B}(y,\lambda)\subset M$), and the set has reach $r$, the estimator proposed in \cite{cas21} estimates the reach. Also, if the set and its complement are $r$-convex, then the set has positive reach  (see Lemma A.0.7 in \cite{beatesis} together with \cite{wal99}).  
 
 A second stage in set estimation is to estimate the level sets of the distribution (see \cite{cad06,cgmrc06}), as well as the boundary of the set (see \cite{crc:04}). In this setting, \cite{cf97} proposes to estimate the support of the distribution by means of a kernel-based density estimator.  
 
 Next, and getting closer to our problem, the interest is on some functional of the set, such as the $d$-dimensional Lebesgue measure of the set, as well as the measure of its boundary; see for instance \cite{cfrc07}.
 
 Recently, the study of statistical methods for manifold valued data (known as manifold learning) has gained attention, due to its application in dimension reduction  (among others). The aim is to recover a lower dimensional structure from the data, see for instance \cite{aa18,fef16,gen12a,gen12b,sma08}and the references therein, or a functional of it, see for instance \cite{aa20,aa17}. Several classical problems have been tackled in this setting, such as density estimation. Here again, the reach  plays a key role as a shape restriction. In \cite{aa19}, an estimator of the reach is proposed for manifold valued data with ``the key assumption that both the point cloud and the tangent spaces were jointly observed''.  For this ``oracle framework [...] it is showed to achieve uniform expected loss bounds over a $\mathcal{C}^3$-like model" and ``upper and lower bounds on the minimax rate for estimating the reach are obtained''.  Also for manifold valued data a different estimator of the reach was introduced recently in \cite{bere22}, where the manifold is assumed to be at least of class $\mathcal{C}^3$, and it must be previously estimated with the manifold estimator proposed in \cite{aa19_2}. The rates of convergence obtained (in probability and in $L^1$) are  of order $(\log(n)/n)^{k/(2d)}$  if the manifold $M\subset \mathbb{R}^d$ is of class $C^k$, for $k\geq 4$, and $(\log(n)/n)^{1/d}$, for $k=3$. The two aforementioned estimators requires the manifold to have no boundary. This assumption is not required in our proposal, which is, up to our knowledge, the only consistent estimator proposed in this setup.   In  \cite{aa22} an estimator of the reach is also proposed. They obtain better convergence rates,   but with stronger hypotheses and a non-computable estimator.

 In what follows, we will study the problem of estimating the reach of a set, looking for universally consistent  estimators, that is, assuming only that the set has positive reach. The rate of convergence obtained depends on the Hausdorff distance between the sample and the set. No assumptions are made on the distribution of the sample.

 Our estimation procedure is based on an equivalent definition of reach given in  \cite{boi}, which provides a new nice geometrical interpretation of the reach. In Theorem \ref{consist}, we prove the universal consistency of the estimator.  With an additional assumption in Corollary \ref{cor}, we derive a convergence rate for the proposed reach estimator. In Section \ref{nonest}, we prove that it is not possible to determine based on a finite sample if the reach is zero or not.  In
 Section \ref{simus} we report the results of a small simulation study, and in 
 Subsection \ref{sesgo}, we introduce a bias correction method for the case where $M$ is a manifold.

 \section{Main definitions and geometric results}
 We start by fixing some notation to be used throughout the manuscript. Given a set $M\subset\mathbb{R}^d$, we denote by $\partial M$ and $\textnormal{int}(M)$, the boundary and  interior of $M$, respectively.  In what follows, we assume that $M$ is compact.
 
 We denote by $\left\|\cdot\right\|$ the Euclidean norm in $\mathbb{R}^d$.
 A closed ball of radius $\eps>0$ centred at $x$ is denoted by $\mathcal{B}(x,\eps)$, and an open ball is denoted by $\mathring{\mathcal{B}}(x,\eps)$.  
 Given $\eps>0$ and a set $A\subset \mathbb{R}^d$, $B(A,\eps)$ denotes the parallel set $B(A,\eps)=\{x\in \mathbb{R}^d\colon\ d(x,A)\leq \eps\}$, where $d(x,A)=\inf\{\|x-a\|\colon\ a\in A\}$. The $d$-dimensional Lebesgue measure on $\mathbb{R}^d$ of a set $M$ is denoted by $|M|_d$.
 Given two non-empty compact sets $A,C\subset \mathbb{R}^d$, the Hausdorff distance between $A$ and $C$ is defined as 
 $$d_H(A,C)=\max\big\{\max_{a\in A}d(a,C), \ \max_{c\in C}d(c,A)\big\}.$$ Given a continuous  curve $\gamma:[0,T]\to M$, we define its length as
 $$l(\gamma)=\sup_P \sum_i ||\gamma(t_{i+1})-\gamma(t_i)||,$$
 where the supremum is over all finite partitions of $[0,T]$. Given  $x,y\in M$ we define the geodesic distance between them as $d_M(x,y)=\inf_\gamma l(\gamma)$,
 where the infimum is over all continuous curves joining $x$ and $y$. In what follows, we assume that $M$ is geodesically convex; that is, for any two points $x,y\in M$ there exists a geodesic connecting them, with length $d_M(x,y)$ (see \cite{bern}).

 
 Following the notation in \cite{federer:59}, let $\text{Unp}(M)$ be the set of points $x\in \mathbb{R}^d$ with a unique closest point on $M$.

 \begin{defn}\label{reach1} For $x\in M$, let {\emph{reach}}$(M,x)=\sup\{r>0:\mathring{\mathcal{B}}(x,r)\subset\emph{Unp}(M)\big\}$. The reach of $M$ is defined by $\emph{reach}(M)=\inf\big\{\emph{reach}(M,x):x\in M\big\},$ and $M$ is said to be of positive reach if $\emph{reach}(M)>0$.
 \end{defn}

 Theorem 1 in \cite{boi}, states that  when $M$ is closed $\textrm{reach}(M)$ equals
 \begin{equation}\label{reach2}
 	\sup \Big\{  r>0, \ \forall a, b \in M, \Vert a-b\Vert < 2r \Rightarrow  d_M(a,b)\leq 2r \text{arcsin}\left(\frac{\Vert a-b\Vert}{2r}\right)\Big\}.
 \end{equation}

 Another important geometric restriction, which is required to get the convergence rate of the estimator in the iid case, is the standardness, see \cite{crc:04}.

 \begin{defn}\label{def-stand}  A set $M\subset \mathbb{R}^d$ is said to be standard with respect to a
 	Borel measure $\nu$ in a point $x$ 
 	if there exists $\lambda>0$ and $\eta>0$ such that
 	\begin{equation} \label{estandar}
 		\nu(\mathcal{B}(x,\eps)\cap M)\geq \eta |\mathcal{B}(x,\eps)|_d,\quad 0<\eps\leq \lambda.
 	\end{equation}
 	A set $M\subset \mathbb{R}^d$ is said to be standard if \eqref{estandar} hold for all $x\in M$.
 \end{defn}

 Let $\mathcal{X}_n=\{X_1,\dots,X_{n}\}\subset M$ be a finite set, and $G_n=(\mathcal{X}_n,E_n)$ be a graph with vertex in $\mathcal{X}_n$ and edges $E_n$.
 
 Define as in \cite{bern}
 \begin{align*}
 	d_{G_n}(x,y)=&\min_P \sum_{i=1}^{p-1} ||X_{j_{i+1}}-X_{j_i}||,
 \end{align*}	
 where $P=(X_{j_{1}},\dots,X_{j_{p}})\subset\mathcal{X}_n$ varies over all paths along the edges of $G_n$ connecting $x=X_{j_1}$ to $y=X_{j_p}$.
 
 \begin{defn}
 	Let $\{\eps_n\}_n$ a sequence of strictly positive real numbers such that $\eps_n\to 0$, let us define the graph $G_n=(\mathcal{X}_n,E_n)$ such that $(X_i,X_j)\in E_n$ if and only if $||X_{i}-X_j||\leq \eps_n$. 
 \end{defn}

 The following theorem, whose proof is given in the Appendix, states that $d_{G_n}$ approximates $d_M$.  The proof follows closely the ideas used in \cite{bern}. It is included because we state our results in terms of the reach of the set, and not the curvature, as it is done in \cite{bern}, and the points were there are changes are clarified.

 \begin{teo} \label{auxth} With the notation introduced before, let $M$ be a compact, geodesically convex set, such that its reach, $r_0$, is strictly positive. Let $\{\eps_n\}_n$ be a sequence of strictly positive real numbers such that  $\tau_n=d_H(\mathcal{X}_n,M)/\eps_n\to 0$,  where $\eps_n\to 0$,  and $n$ is large enough to guarantee $\eps_n<2r_0$. Then, for all $x,y\in M$,
 	\begin{equation}\label{auxeq1} 
 		\Big(1- \frac{1}{24}\Big(\frac{\pi \eps_n}{2r_0}\Big)^2\Big) d_M(x,y)  \leq d_{G_n}(x,y)\leq (1+4\tau_n)d_M(x,y),
 	\end{equation}	
 	where  $d_{G_n}(x,y)$ is the distance in the graph build previously, but including $x$ and $y$ as vertices.
 \end{teo}

 \section{Reach estimation}
 \label{nuestroe}
 Given a set $\mathcal{X}_n=\{X_1, \ldots, X_n\} \subset M$,  and a sequence of positive real numbers $\{\eps_n\}_n$, the plug-in estimator of $\textrm{reach}(M)$ based on the graph $G_n$ defined in the previous section, is
 
 \begin{multline*}
 	\hat{r}_n=\sup \Big\{  r>0, \ \forall X_i \neq X_j \in \mathcal{X}_n,  \Vert X_i-X_j\Vert < 2r \Rightarrow \\  d_{G_n}(X_i,X_j)\leq 2r(1+\eps_n^2)\text{arcsin} \Big(\frac{\Vert X_i-X_j\Vert  }{2r }\Big)\Big\}.
 \end{multline*}

 In Figure \ref{fir}, it is shown as a green dotted line $d_{G_n}(a,b)$, the distance between two points $a,b\in M$ (with $\Vert a-b \Vert < 2r $) in the graph $G_n$ built as before for a given $\eps_n$. As a black solid line, it is shown an arc in the circle of radius $r$ joining $a$ and $b$ ($a,b \in \mathcal{X}_n$), whose length is $2r \text{arcsin}(\Vert a-b\Vert/2r)\geq d_{G_n}(a,b)$.

 \begin{figure}[ht]
 	\begin{center}
 		\includegraphics[scale=2]{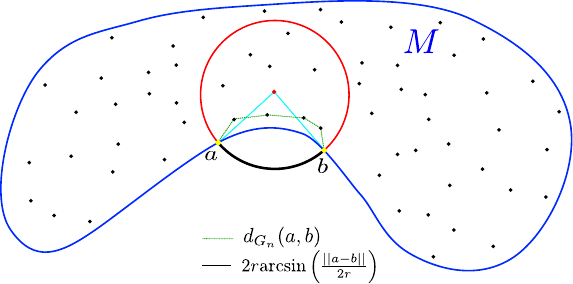}
 	\end{center}
 	\caption{As a green dotted line, it is shown $d_{G_n}(a,b)$. As a black solid line, it is shown as the arc of length  $2r \text{arcsin}(\Vert a-b \vert /2r)$ joining two points $a,b \in \mathcal{X}_n$. }
 	\label{fir}
 \end{figure}
 
 The following theorem states that the estimator $\hat{r}_n$ is, for $n$ large enough, bounded from below by the reach $r_0$ of $M$, and bounded from above by $r_0/(1-\eps_n)$. 
 
 \begin{teo} \label{consist} Under the hypotheses of Theorem \ref{auxth}, for all $n$ large enough,  we have
 	\begin{equation}\label{eqth}
 		r_0 \leq \hat{r}_n \leq \frac{r_0}{1-\eps_n}, 
 	\end{equation}
 	where $\eps_n$ is a sequence of strictly positive real numbers such that $\eps_n\to 0$ and $\delta_n/\eps_n^3\to 0$, being $\delta_n=d_H(\mathcal{X}_n, M)$.
 \end{teo}

 If we assume that $\textrm{reach}(M)>0$, and $M$ is standard (see Definition \ref{def-stand}), we get the following corollary, regarding the convergence rate of $\hat r_n$, which is a consequence of Theorem 3 in \cite{crc:04} and Theorem \ref{consist}.

 \begin{cor} \label{cor} Let $X_1,X_2,\dots$ be a sequence of iid observation drawn from a distribution $P_X$ on $\mathbb{R}^d$. Assume that the support $M$ of $P_X$ is compact, standard with respect to $P_X$, and has reach $r_0>0$. Then,  with probability one, for $n$ large enough,
 	\begin{equation}\label{rate1}
 		r_0\leq \hat{r}_n\leq r_0+c\Big(\frac{\log(n)}{n}\Big)^{\frac{1}{3d}}\beta_n,
 	\end{equation}
 	for $\eps_n=c(\log(n)/n)^{\frac{1}{3d}}\beta_n$, being  $c$ any constant larger than $(2/(\eta \omega_d))^{1/d}$, where $\omega_d=|\mathcal{B}(0,1)|_d$, and $\eta$ is the standardness constant, and $\beta_n\to +\infty$ any sequence.
 \end{cor}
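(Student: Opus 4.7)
The proof is a direct combination of Theorem \ref{consist} with a classical almost-sure rate for the Hausdorff distance under standardness. The lower bound $r_0\le \hat r_n$ is already contained in Theorem \ref{consist}, so all the work lies in choosing $\eps_n$ so that (i) the hypothesis $\delta_n/\eps_n^3\to 0$ of that theorem is satisfied almost surely, and (ii) the upper bound $r_0/(1-\eps_n)$ translates into the claimed rate.

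My first step is to invoke Theorem~3 of \cite{crc:04}: since $M$ is compact and standard with constants $(\eta,\lambda)$ with respect to $P_X$, with probability one
\begin{equation*}
\delta_n=d_H(\mathcal{X}_n,M)\le c_0\Big(\frac{\log n}{n}\Big)^{1/d}
\end{equation*}
for every $n$ large enough and every $c_0>(2/(\eta\omega_d))^{1/d}$. Fix such a $c_0$ and take the same constant $c=c_0$ in the definition $\eps_n=c(\log n/n)^{1/(3d)}\beta_n$. Since $\beta_n\to\infty$ (with $\beta_n$ chosen so that $\eps_n\to 0$, which only requires $\beta_n=o((n/\log n)^{1/(3d)})$), we have
\begin{equation*}
\frac{\delta_n}{\eps_n^{3}}\le \frac{c_0(\log n/n)^{1/d}}{c^{3}(\log n/n)^{1/d}\beta_n^{3}}=\frac{1}{c^{2}\beta_n^{3}}\longrightarrow 0\quad\text{a.s.}
\end{equation*}
In particular $\tau_n=\delta_n/\eps_n\to 0$ a.s., so the hypotheses of Theorems \ref{auxth} and \ref{consist} hold almost surely for $n$ large enough.

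Second, I apply Theorem \ref{consist} on the event of full probability where the previous estimates hold. This yields $r_0\le \hat r_n\le r_0/(1-\eps_n)$. For $n$ large enough, $\eps_n\le 1/2$, so a first-order expansion gives
\begin{equation*}
\hat r_n-r_0\;\le\;\frac{r_0\,\eps_n}{1-\eps_n}\;\le\;2r_0\,\eps_n\;=\;2r_0 c\Big(\frac{\log n}{n}\Big)^{1/(3d)}\beta_n,
\end{equation*}
which is exactly the rate stated in \eqref{rate1} (absorbing the harmless factor $2r_0$ into the constant, as is the convention in the statement).

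There is no real obstacle here; the only point requiring a little care is that Theorem \ref{consist} requires $\delta_n/\eps_n^{3}\to 0$, which is why the exponent $1/(3d)$ appears: it is precisely the exponent that balances the cube in the denominator against the Hausdorff rate $(\log n/n)^{1/d}$. Allowing an arbitrary divergent sequence $\beta_n$ provides the slack needed so that the convergence in (i) holds strictly (not just as $O(1)$), which is what the statement of Theorem \ref{consist} demands.
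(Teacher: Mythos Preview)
Your proof is correct and is exactly the argument the paper has in mind: the paper gives no explicit proof beyond the sentence ``which is a consequence of Theorem 3 in \cite{crc:04} and Theorem \ref{consist}'', and you have written out that consequence in the natural way, including the verification that the choice $\eps_n=c(\log n/n)^{1/(3d)}\beta_n$ forces $\delta_n/\eps_n^{3}\to 0$ almost surely. Your remarks on the harmless multiplicative constant $2r_0$ and on the implicit requirement $\beta_n=o((n/\log n)^{1/(3d)})$ so that $\eps_n\to 0$ are apt; these are minor imprecisions in the corollary's phrasing rather than gaps in your argument.
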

 
 \begin{rem}
 	The standarness hypothesis (defined in \eqref{estandar}) in Corollary \ref{cor} is not fulfilled when $M$ is a $d'$-dimensional Riemannian manifold, $d'<d$, and $P_X$ is a distribution supported on $M$. In that case, instead of assumption \eqref{estandar}, the required condition for $P_X$ and $M$ is the following: there exists $\lambda>0$ and $\eta>0$ such that for all $x \in M$ $P_X(\mathcal{B}(x,\eps))\geq \eta|\mathcal{B}(x,\eps)|_{d'}$ for all $0<\eps<\lambda$,  $\mathcal{B}(x,\eps)$ being  the ball w.r.t. the Riemannian metric, and $|\cdot|_{d'}$ being the $d'$-dimensional Lebesgue measure. With the same ideas used to prove Theorem 3 in \cite{crc:04}, it can be proved that $d_H(\aleph_n,M)=\mathcal{O}((\log(n)/n)^{1/d'})$, and then the rate \eqref{rate1} is improved up to 
 	$$r_0\leq \hat{r}_n\leq r_0+c(\log(n)/n)^{1/(3d')}\beta_n,$$ 
 	for $\eps_n=c(\log(n)/n)^{\frac{1}{3d'}}\beta_n$, being $c$ any constant larger than $(2/(\eta \omega_{d'}))^{1/d'}$, where $\omega_{d'}$ is the $d'$-dimensional Lebesgue measure of a ball of radius one in $\mathbb{R}^{d'}$, $\eta$ is the standardness constant, and $\beta_n\to +\infty$ is any sequence of positive real numbers. 
 	
 \end{rem} 
 \color{black}


 \section{Non-estimability}\label{nonest}
 
 In this section we will prove that it is not possible to determine, based on  a finite sample, if the reach of the  support, $sup(f)$, of a density $f$ is zero or not. 
 This is equivalent to showing that the functional
 $$\alpha(f)=\begin{cases}
 	1 & \text{ if }\textrm{reach}(sup(f))=0\\
 	0 & \text{otherwise},
 \end{cases}
 $$
 cannot be consistently estimated from a sequence of iid observations. This is the case of several different problems. The most simple and well-known is that one cannot determine if the mean of a distribution is finite or not based on a finite sample. However, as in our case, once assuming that the mean is finite, it can be consistently estimated. A fundamental contribution on this general problem is given in \cite{lecam}, where they provide necessary and sufficient conditions for the existence of consistent estimators.

 To prove that $\alpha(f)$ cannot be consistently estimated, we will make use Lemma 1.1 in \cite{fraimeloche}. 
 \begin{lem}[\textbf{Lemma 1.1 in \cite{fraimeloche}}] \label{fm}
 	Let $\mathcal T$ a subset of densities equipped with some norm $\Vert . \Vert$ that makes $(\mathcal{T}, \Vert . \Vert)$ a complete metric space. Assume that $\Vert f \Vert_1 \leq c \Vert f \Vert$ for some constant $c$ and all $f \in \mathcal T$. Let $\Phi: \mathcal T \to [-K,K]$ be any bounded characteristic of the densities in $\mathcal T$. If $\Phi$ is consistently estimable on $\mathcal{T}$, then there exists a dense subset of points in $\mathcal T$ at which $\Phi$ is continuous with respect to the topology induced by the $\Vert . \Vert$ norm. Therefore, if $\Phi$ is discontinuous at every point in $\mathcal T$, it is not consistently estimable in $\mathcal T$.
 \end{lem}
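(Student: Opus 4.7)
The plan is to reduce Lemma \ref{fm} to the classical Baire category theorem for pointwise limits of continuous functions. Suppose $\Phi$ is consistently estimable, so there is a sequence of Borel maps $\hat\Phi_n\colon\mathbb{R}^{dn}\to\mathbb{R}$ with $\hat\Phi_n(X_1,\dots,X_n)\to \Phi(f)$ in $f^{\otimes n}$-probability for every $f\in\mathcal T$. Since $\Phi$ takes values in $[-K,K]$, I would first replace $\hat\Phi_n$ by its truncation to $[-K,K]$; this preserves consistency because truncation is $1$-Lipschitz and fixes the limit $\Phi(f)\in[-K,K]$. From now on I may assume $\hat\Phi_n\in[-K,K]$.

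Next, I would define the deterministic ``smoothed'' functions
\[
g_n(f) \defeq \int \hat\Phi_n\, f^{\otimes n}, \qquad f\in\mathcal T.
\]
Since $\hat\Phi_n$ is bounded and converges in $f^{\otimes n}$-probability to the constant $\Phi(f)$, the bounded convergence theorem yields $g_n(f)\to \Phi(f)$ for every $f\in\mathcal T$. To show that each $g_n$ is continuous on $(\mathcal T,\|\cdot\|)$, I would chain the standard estimates
\[
|g_n(f)-g_n(h)| \leq K\,\|f^{\otimes n}-h^{\otimes n}\|_{TV} \leq Kn\,\|f-h\|_{TV} = \tfrac{Kn}{2}\|f-h\|_1 \leq \tfrac{Kcn}{2}\,\|f-h\|,
\]
where the first inequality is routine for integrals of bounded functions, the second is the standard subadditivity of total variation for product measures, the third is the identity $\|p-q\|_{TV}=\tfrac12\|p-q\|_1$ for densities, and the last uses exactly the hypothesis $\|\cdot\|_1\leq c\|\cdot\|$. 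So each $g_n$ is Lipschitz, and in particular continuous, on $(\mathcal T,\|\cdot\|)$.

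Having exhibited $\Phi$ as the pointwise limit of continuous real-valued functions on the complete metric space $(\mathcal T,\|\cdot\|)$, I would invoke the Baire theorem on Baire-class-$1$ functions: on any Baire space the continuity set of such a pointwise limit is a dense $G_\delta$. This delivers the dense set of continuity points asserted in the lemma, and the contrapositive (no continuity points $\Rightarrow$ no consistent estimator) is then immediate.

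The main obstacle I anticipate is not geometric or probabilistic but technical: one has to justify carefully that truncation preserves consistency in probability (easy but needed before bounded convergence can be applied), and one has to take the Lipschitz estimate with respect to the abstract norm $\|\cdot\|$ rather than $\|\cdot\|_1$, which is precisely where the hypothesis $\|f\|_1\leq c\|f\|$ is indispensable. Once these two points are in place the Baire category argument closes the proof without further input.
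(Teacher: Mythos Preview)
The paper does not actually prove this lemma: it is quoted verbatim from \cite{fraimeloche} and used as a black box in the subsequent non-estimability theorem, so there is no ``paper's own proof'' to compare against. That said, your argument is correct and is essentially the classical one (going back to Le Cam and appearing in Fraiman--Meloche and elsewhere): truncate the estimators to $[-K,K]$, pass to expectations $g_n(f)=\int\hat\Phi_n\,f^{\otimes n}$, use boundedness plus convergence in probability to get $g_n(f)\to\Phi(f)$ pointwise, check Lipschitz continuity of each $g_n$ via $\|f^{\otimes n}-h^{\otimes n}\|_{TV}\le n\|f-h\|_{TV}$ together with the hypothesis $\|\cdot\|_1\le c\|\cdot\|$, and then invoke the Baire theorem on pointwise limits of continuous functions. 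All the steps you flag as technical (truncation preserving consistency, the telescoping TV bound, and the role of the norm comparison) are handled correctly.
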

 In our case, $\mathcal{T}$ is the set of densities (w.r.t the Lebesgue measure) endowed with the $L^1$ norm, $\|\cdot\|_1$, which is, by Scheffe's lemma, a complete space.
 
 \begin{teo}
 	Let $f$ be a density such that $\emph{reach}(supp(f))>0$, then  for all $\delta>0$ there exists $f_\delta$ a density such that $||f-f_\delta||_1<\delta$ and $\emph{reach}(sup(f_\delta))=0$. Moreover,  if $\emph{reach}(sup(f))=0$, then for all $\eps>0$ there exists $f_\eps$ such that $\|f_\eps-f||_{1}<\eps$ and $\emph{reach}(sup(f_\eps))>0$. Thus, the functional $\alpha$ is discontinuous at any density in $\mathcal{T}$, and therefore from    Lemma \ref{fm}, it is not estimable.
 \end{teo}
 \begin{proof}  If $\textrm{reach}(sup(f))>0$ then from Propositions 1 and 2 in \cite{cue12} $sup(f)$ fulfills the exterior rolling ball condition (see section 2 in \cite{cue12}). It is easy to see that this imply that $sup(f)$ is $\rho,h$-cone-convex for some $\rho\in (0,\pi/4]$ and $h>0$ (see Definition 4 in \cite{chola:14}), then from Proposition 2 in \cite{chola:14} it follows that $|\partial sup(f)|_d=0$, from where it follows that  \color{black} $int(\sup(f))\neq \emptyset$. Let $x\in int(sup(f))$, then we can remove from $int(sup(f))$ an arbitrary small open cone $T$ centred at $x$, whose interior is included in $int(\sup(f))$. Define $f_\delta=cf\mathbb{I}_{T^c}$ as being $c$ a normalizing constant, and $T^c$ the complement of the set $T$. Clearly $\textrm{reach}(sup(f_\delta))=0$.  Let us assume now that $\textrm{reach}(sup(f))=0$. Let $\eps>0$ and $K_1=K_1(\eps)$ a compact set such that $\int_{K_1^c}f(x)dx<\eps/4$.  Let $\kappa>0$ small enough such that 
 	$$\int f(x)\mathbb{I}_{f^{-1}([0,\kappa))}dx<\epsilon/4.$$
 	Define	$K=K_1\cap \overline{f^{-1}([\kappa,+\infty))}$ and $\xi(x)=d(x,K^c)\mathbb{I}_K(x)$. Let $\gamma$ small enough such that $|K\setminus \xi^{-1}([\gamma,+\infty))|_d<\eps/2$.  Let $g$ a $\mathcal{C}^2$ function such that $\sup_{x\in K} |\xi(x)-g(x)|<\gamma/2$. Let $S_\gamma:=g^{-1}([\gamma/2,\infty))$ then $S_\gamma\subset  K$, is compact and $\mathcal{C}^2$, and then its boundary is has positive reach,  (see \cite{tha08}). The density $cf\mathbb{I}_{S_\gamma}$ fulfills the desired properties, where the constant $c$ is chosen to integrate 1. 
 \end{proof}

 \section{A small simulation study}\label{simus}

 \subsection{Example 1}
 In this first example, to study the performance of $\hat{r}_n$, we considered the two dimensional sets,
 $$M_r= \{(x,y) \in \mathbb{R}^2 :  r^2 < x^2 + y^2 < r^2 + 1/\pi \},$$ 
 for  $r \in \{ 0.25, 0.5 \}$. Then, $\textrm{reach}(M_r)=r$. For each $r$ fixed, we drawn a sample of $n$ points uniformly distributed on $M_r$, for $n \in \{500,750,1000,1250,1500\}$. The values of $\hat{r}_n$ are computed using  $\eps_n = (\max_i\min_{j\neq i} ||X_i-X_j||)^{1/2}$, for a constant $c>(4/\pi)^{1/2}$. 
 The whole procedure is replicated $100$ times. The mean, median and standard deviation of these replications are shown in Table \ref{estim}. As is observed in both scenarios,  the performance of our estimator improves when the sample sizes increases. In addition, the deviation is smaller for $r=0.5$ than for $r=0.25$.  
 There is a small overestimation of the reach, expected according to Theorem  \ref{auxth}, which is smaller for $r=0.5$. 
 

 \begin{table}
 	\caption{Mean, median and standard deviation over $100$ replication, of $\hat{r}_n$ for different values of the reach $r$ and $n$.}
 	\label{estim}
 	\footnotesize
 	\begin{center}
 			\begin{tabular}{c|cccccc|c}\toprule
 				$n$ & \multicolumn{6}{c}{Inner radius} & $\eps_n$  \\
 				& \multicolumn{3}{c}{ $r$=0.25} & \multicolumn{3}{c}{ $r$=0.5}   & \\
 				\cmidrule(lr){2-4} \cmidrule(lr){5-7} 
 				&mean & median & sd &mean & median & sd & \\
 				500&   0.262&0.265&0.005&   0.503&0.504&0.002&     0.44 \\
 				750&   0.258&0.261&0.003&   0.502&0.503&0.002&     0.41\\
 				1000&   0.256&0.258&0.002&   0.502&0.502&0.001&       0.40\\
 				1250&   0.256&0.258&0.002&   0.502&0.500&0.001&         0.39 \\
 				1500&   0.255&0.255&0.002&   0.501&0.500&0.001&        0.37 \\
 				\bottomrule
 			\end{tabular}
 	\end{center}
 \end{table}

 \subsection{Bias correction }
 \label{sesgo}
 The overestimation of the reach is clearly established by Theorem  \ref{auxth}, and is also confirmed by the results in Table \ref{estim}. This problem becames worse in the case of manifold-valued data, where we conjecture that it is  due to the fact that $d_{G_n}$ tends to underestimate $d_M$ when $M$ is a lower-dimensional manifold.  \color{black} Thus, we consider  a bias correction, following  the proposal given in \cite{arias2019} for volume estimation, adapted for our problem. It worth to be mention that the proposal in \cite{arias2019} has been shown  to be minimax optimal under the assumption that the data is uniformly distributed on the manifold.  However, the consistency of the bias correction goes far beyond the scope of this manuscript. We show the performance through a simulation study.

 We split the set $\mathcal X_n$ into two subsets $\mathcal X_1$ and $\mathcal X_2$ of sizes $n_1$ and $n_2$, respectively.
 
 \begin{enumerate}
 	\item Calculate $\hat{r}_{n_1}$ based on the set $\mathcal X_1$.
 	\item Calculate $\hat p_n$, defined by 
 	\begin{multline*}
 		{n_2 \choose 2}^{-1}\Big \vert \Big\{(X_i,X_j) \in \mathcal X_2: i \neq j , \Vert X_i - X_j\Vert < 2 \hat{r}_{n_1} \ \mbox{ but } \\   d_{G_{n_2}}(X_i,X_j) >  2\hat r_{n_1}(1 - \eps_{n_2}^2)\text{arcsin}\Big(\frac{\Vert X_i-X_j\Vert  }{2\hat r _{n_1} }\Big)\Big\}\Big \vert.
 	\end{multline*}
 	where $|\cdot|$ denotes the cardinality of the set.
 	\item Output: $\hat r_n = [(1-\hat p_n) \vee 1/2]\hat{r}_{n_1}$.
 \end{enumerate}

 \subsubsection{Example 2}
 
 In this example, the set is  $M=\{ (x,y) \in \mathbb{R}^2 : x^2 + 4y^2 = 1, x\geq 0\}$, which is a manifold with boundary; see Figure \ref{reael}. It is easy to see that $\textrm{reach}(M)=0.25$. We compare the performance of our estimator and the estimator proposed in \cite{aa19}. Its first approach requires to know the tangent spaces  at the sample points.
 More precisely, given an iid sample  $\mathcal{X}_n \subset M$, $M$ being  a $d$-dimensional manifold in $\mathbb{R}^D$, the authors proposes the estimator $\tilde{r}$ given by
 
 \begin{equation}
 	\label{amm}
 	\tilde{r}= \inf_{X_i \neq X_j \in \mathcal{X}_n } \frac{\Vert X_j-X_i \Vert^2 }{ 2d \left( X_j-X_i, T_{X_i} M \right)},
 \end{equation}
 where  $T_{X_i} M$ is the $d$ affine linear space, tangent to $M$ at $X_i$.
 
 
 If  $M$ is unknown, then $T_{X_i} M$ must be estimated. Proposition 6.1 in \cite{aa19} provides bounds for the stability of the estimator $\tilde{\tau}$ when the tangent spaces are perturbed.
 Let  $TM= \{\widehat{T_{X_i}M}\}_{X_i \in \mathcal{X}_n}$ be a family of $d$ affine linear spaces, let 
 
 \begin{equation}
 	\label{amari}
 	\tilde{r} (\mathcal{X}_n, T)= \inf_{X_i \neq X_j \in \mathcal{X}_n } \frac{\Vert X_j-X_i \Vert^2 }{ 2 d\left( X_j-X_i, \widehat{T_{X_i}M} \right)}.
 \end{equation}

 \begin{figure}[ht]
 	\begin{center}
 		\includegraphics[height=6.6cm,width=7.2cm]{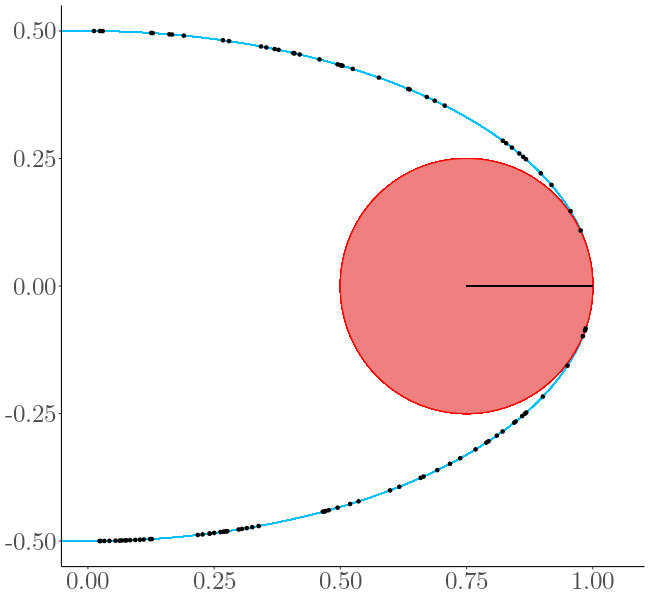}
 	\end{center}
 	\caption{ The half-ellipse $M=\{ (x,y) \in \mathbb{R}^2 : x^2 + 4y^2 = 1,\,\, x \geq 0\}$, whose reach is 0.25, and a sample on $M$.}
 	\label{reael}
 \end{figure}
 
 To compare with the proposals given in \cite{aa19},  we  consider $S_1=\{S_{1,X_i}\}_{X_i\in\mathcal{X}_n}$, defined for each $X_i\in\mathcal{X}_n$, as follows:
 $S_{1,X_i}$ is the estimation of $T_{X_i}M$, by means of local principal components, see  \cite{aa21}.

 We draw samples of size $n$ in the half-ellipse, for  $n \in \{400,600\}$. The samples were generated as follows: first we draw a sample $(X,Y)$ with standard normal bivariate distribution, then  we take $(|X|,Y)$. Finally we project $Z= (|X|,Y)$ onto the ellipse  considering $Z / \Vert Z \Vert_e$ with $\Vert (x,y) \Vert_e= \sqrt{ x^2+ 4 y^2}$.

 We perform 100 replications. For each of them we calculate 
 1) $\tilde{r}_{1,n}=\tilde{r} (\mathcal{X}_n, S_1)$ based on equation (\ref{amari}), 
 2) our estimator, $\hat{r}_{1,n}$, without bias correction, and 
 3) $\hat{r}_{2,n}$, the estimator with the bias correction proposed in Section \ref{sesgo}. We take $\eps_n = (\max_i\min_{j\neq i} ||X_i-X_j||)^{1/2}$, while we follow the suggestion in \cite{aa19} considering only those pairs of points whose distance is at least $\delta$ with $\delta$ of order 
 $\log (n) \big /n$.
 
 The boxplots  for these estimators are given in Figure \ref{rech1}.The average differences between the true tangent subspaces 
 $TM$  and their approximations $S_{1}$ (i.e.,  $\textrm{Error}(k)= \max_{X_i \in \mathcal{X}_n} \Vert T_{X_i}M - S_{1,X_i} \Vert_{op}$, where $k$ stands for the $k$--th replicate) is $0.078$ and $0.054$ for $n=400$ and $n=600$ respectively.
 Figure \ref{rech1} shows how the bias is reduced when we use the bias-corrected estimator $\hat{r}_{2,n}$, which also performs better than   $\tilde{r}_{1,n}$.

 \begin{figure}[ht]
 	\begin{center}
 		\includegraphics[height=7cm,width=12cm]{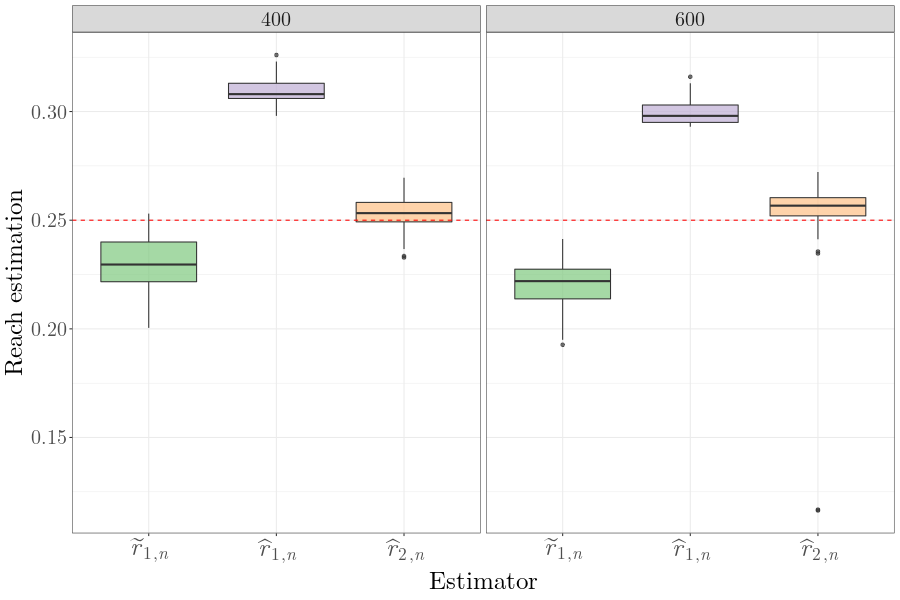}
 	\end{center}
 	\caption{ Estimators boxplots for $\tilde{r}_{1,n}= \tilde{r} (\mathcal{X}_n, S_1)$, $\hat{r}_{1,n}$  without bias correction and  $\hat{r}_{2,n}$ the estimator with the bias correction. }
 	\label{rech1}
 \end{figure}

 				\section{Concluding remarks}
 				
 				We deal with an important problem in set estimation related to geometric measure theory: the estimation of the reach, $r_0$, of a set $M \subset \mathbb R^d$, which includes the case where $M$ is a manifold.
 				A universally consistent estimator of $r_0$, in the sense that no assumptions are required except for $r_0$ being positive for  is proposed. 
 				We show that based on a finite sample with density $f$ w.r.t. the Lebesgue measure, it is not possible to determine if the reach of the support of $f$ is zero or not.
 				The consistency result is related to the convergence to zero of  $d_H(M, \mathcal{X}_n)$ . We conjecture that given any consistent estimator of the reach, $r_n$, and given any sequence $\beta_n\to 0$,  it is possible to find a set  (depending on $\beta_n$), with positive reach,  for which, $r_n$ converge to the reach at a rate slower than $\beta_n$. 
 				
 				However, under the weak additional assumption of standardness,  we provide rates of convergence for the proposed estimator. 
 				For the case where $M$ is a manifold, we adapt our procedure by adding a bias correction method.  An alternative  way to deal with this problem is to consider the estimator $\tilde{r}_n=(1-\alpha\eps_n)\hat{r}_n$, but the optimal choice of the parameter $\alpha$ should be addressed. 
 				
 				The results obtained in our small simulation section are promising. However, a more extended simulation study, to compare with other proposals in the literature is an interesting problem to be addressed.

 				\section*{Appendix}
 				
 				\subsection{Proof of Theorem \ref{consist}}
 				
 				\begin{proof} 
 					To prove the first inequality in \eqref{eqth}, let $X_i,X_j\in \mathcal{X}_n$ such that $||X_i-X_j||<2r_0 $. Let $n$ be large enough such that $(1+4\tau_n)\leq (1+\eps_n^2)$. From \eqref{reach2}
 					\begin{multline*}
 						d_{G_n}(X_i,X_j)\leq (1+4\tau_n)d_M(X_i,X_j)\leq 2r_0 (1+4\tau_n)\text{arcsin} \Big(\frac{\Vert X_i-X_j\Vert}{2r_0 }\Big)\\
 						\leq 	 2r_0(1+\eps_n^2)\text{arcsin} \Big(\frac{\Vert X_i-X_j\Vert  }{2r_0 }\Big).
 					\end{multline*} 
 					Then $\hat{r}_n\geq  r_0$.
 					To prove the second inequality in \eqref{eqth}, we have to prove that for all $x,y\in M$ such that $||x-y||\leq 2\hat{r}_n(1-\eps_n) $,
 					\begin{equation*}
 						d_M(x,y)\leq 2\hat{r}_n(1-\eps_n) \text{arcsin} \Big(\frac{\Vert x-y\Vert}{2\hat{r}_n(1-\eps_n) }\Big).
 					\end{equation*}
 					Since $r_0\leq \hat{r}_n$, it is enough to verify previous inequality for all $x,y\in M$ such that $||x-y||\geq 2r_0(1-\eps_n)$.   This condition will be used later on in the proof, to guarantee that the value of $n$ for which previous inequality holds, does not depend on the pair $x,y$.

 					Denote by 
 					$$\gamma_n=\Big(1- \frac{1}{24}\Big(\frac{\pi \eps_n}{2\textrm{reach}(M)}\Big)^2\Big).$$
 					Let $X_i,X_j\in \mathcal{X}_n$ be the closest points, wrt the euclidean distance, to $x$ and $y$, respectively.   From $\delta_n/\eps_n^3\to 0$, it follows that, for $n$ large enough, 
 					\begin{multline*}
 						||X_i-X_j||\leq ||X_i-x||+||x-y||+||y-X_j||\leq 2\delta_n+2\hat{r}_n(1-\eps_n)<2\hat{r}_n.
 					\end{multline*}
 					Then, for $n$ large enough
 					\begin{equation}\label{eq0}
 						d_{G_n}(X_i,X_j)\leq 2\hat{r}_n(1+\eps_n^2)  \text{arcsin} \Big(\frac{\Vert X_i-X_j\Vert}{2\hat{r}_n  }\Big).
 					\end{equation}
 					From Equation \eqref{auxeq1},  (recall that  $d_{G_n}(x,y)$ is the distance in the graph,  including $x$ and $y$ as vertices)
 					\begin{equation} \label{eq1}
 						\gamma_nd_M(x,y)\leq d_{G_n}(x,y)\leq d_{G_n}(x,X_i)+d_{G_n}(X_i,X_j)+d_{G_n}(X_j,y).
 					\end{equation}
 					Since $||x-X_i||<\delta_n<\eps_n$, $d_{G_n}(x,X_i)=||x-X_i||<\delta_n$. In the same manner, $d_{G_n}(y,X_j)<\delta_n$.  Since $\gamma_n\to 1$ we can take $n$ large enough such that $2\delta_n/\gamma_n<3\delta_n$, then from \eqref{eq0} and \eqref{eq1},  
 					\begin{equation*}\label{eq2}
 						d_M(x,y)\leq 3\delta_n+ 2\hat{r}_n\frac{(1+\eps_n^2)}{\gamma_n}\text{arcsin} \Big(\frac{\Vert X_i-X_j\Vert}{2\hat{r}_n }\Big).
 					\end{equation*}
 					Let $x,y$ such that $||x-y||<2\hat{r}_n(1-\eps_n)$, then
 					\begin{multline*}
 						\frac{||X_i-X_j||}{2\hat{r}_n  }\leq \frac{||x-y||+2\delta_n}{2\hat{r}_n }=\frac{||x-y||}{2\hat{r}_n(1-\eps_n) }+\frac{||x-y||}{2\hat{r}_n }\Big(1-\frac{1}{1-\eps_n}\Big)+\frac{\delta_n}{ \hat{r}_n}=\\
 						\frac{||x-y||}{2\hat{r}_n(1-\eps_n) }-\eps_n\frac{||x-y||}{2\hat{r}_n (1-\eps_n)}+\frac{\delta_n}{ \hat{r}_n}.
 					\end{multline*}
 					Using that $\text{arcsin}(a-b)\leq \text{arcsin}(a)-\text{arcsin}'(a-b)b$, for all $a,b>0$ such that $0\leq a-b<1$,  (which follows from the fact that $\text{arcsin}'(t)>1$ for all $0<t<1$), for $a=||x-y||/(2\hat{r}_n (1-\eps_n))$ and $b=\eps_na-\delta_n/\hat{r}_n$, we get
 					\begin{multline} \label{eq000}
 						d_M(x,y)\leq 2\hat{r}_n\frac{(1+\eps_n^2)}{\gamma_n}\Big[\text{arcsin}(a)-\text{arcsin}'(a-b)b\Big]+3\delta_n=\\
 						2\hat{r}_n(1-\eps_n)\text{arcsin}(a)+  2\hat{r}_n\Big(\frac{(1+\eps_n^2)}{\gamma_n}-1+\eps_n\Big)\text{arcsin}(a)\\
 						-2\hat{r}_n\frac{(1+\eps_n^2)}{\gamma_n}\text{arcsin}'(a-b)\Big(\eps_n a-\frac{\delta_n}{\hat{r}_n}\Big)+3\delta_n. 
 					\end{multline}
 					Using that $a<1$, $\hat{r}_n\geq r_0>0$, and $\gamma_n\to 1$, we get that, for $n$ large enough,
 					\begin{multline} \label{eq01}
 						2\hat{r}_n\frac{(1+\eps_n^2)}{\gamma_n}\text{arcsin}'(a-b)\frac{\delta_n}{\hat{r}_n}\leq 	3\text{arcsin}'(1-\eps_n+\delta_n/r_0)\delta_n \leq 	 C_1\frac{\delta_n}{\sqrt{\eps_n}}\leq \eps_n^2,
 					\end{multline}
 					$C_1$ being a positive constant. In addition, for $n$ large enough, using that $\hat{r}_n\leq diam(M)$, $\text{arcsin}(a)\leq \pi/2$, and $\gamma_n\to 1$, 
 					\begin{equation}\label{eq02}
 						\hat{r}_n\Big(\frac{(1+\eps_n^2)}{\gamma_n}-1\Big)\text{arcsin}(a)\leq \frac{diam(M)\pi}{4}(1-\gamma_n+\eps_n^2)\leq C\eps_n^2.
 					\end{equation}
 					$C$ being a positive constant. Then, using \eqref{eq01} and \eqref{eq02} in \eqref{eq000}, and the fact that $\delta_n/\eps_n^3\to 0$, we get that, for $n$ large enough,
 					\begin{multline*}
 						d_M(x,y)\leq 2\hat{r}_n(1-\eps_n)\text{arcsin}(a)+  C'\eps_n^2+\\
 						2\hat{r}_n\eps_n\Big[\text{arcsin}(a)-\frac{(1+\eps_n^2)}{\gamma_n}\text{arcsin}'(a(1-\eps_n)+\delta_n/\hat{r}_n)a\Big],
 					\end{multline*}
 					$C'$ being a positive constant. To prove the second inequality in \eqref{eqth}, it is enough to prove that there exists $k>0$ such that for $n$ large enough,
 					\begin{equation}\label{eqth3}
 						\text{arcsin}(a)-\frac{(1+\eps_n^2)}{\gamma_n}\text{arcsin}'(a(1-\eps_n)+\delta_n/\hat{r}_n)a<-k.
 					\end{equation}
 					
 					Observe that $k=0$ is not enough to conclude  because, if that is the case it may happen that $$\text{arcsin}(a)-\frac{(1+\eps_n^2)}{\gamma_n}\text{arcsin}'(a(1-\eps_n)+\delta_n/\hat{r}_n)a\to 0^-$$ 
 					as $n\to \infty$ at a  faster rate  than $\epsilon_n$, which does not guarantee that 
 					$d_M(x,y)\leq 2\hat{r}_n(1-\eps_n)\text{arcsin}(a)$ due to the $C'\eps_n^2$ term.
 					
 					From $||x-y||\geq r_0(1-\eps_n)$ and $\hat{r}_n\leq diam(M)$, we know that $r_0/diam(M)<a$, also $a<1+b$.
 					Let us bound,
 					\begin{multline*}
 						\text{arcsin}(a)-\frac{(1+\eps_n^2)}{\gamma_n}\text{arcsin}'(a(1-\eps_n)+\delta_n/\hat{r}_n)a\leq \\
 						\text{arcsin}(a)-\frac{(1+\eps_n^2)}{\gamma_n}\text{arcsin}'(a(1-\eps_n))a=:f_n(a).
 					\end{multline*}

 					Let us consider the function $g(t)=\text{arcsin}(t)-\text{arcsin}(t)'t$, which fulfills $g(0)=0$ and  $g'(t)<0$ for all $0<t<1$. From $r_0/diam(M)<a$ it follows that 
 					$g(a)\leq g(r_0/diam(M))<0$.
 					The functions $f_n$ converges uniformly to $g$, on any closed interval containing $a$. This entails that $f_n(a)\leq g(r_0/diam(M))/2$, for $n$ large enough, from where it follows  \eqref{eqth3}.
 					
 				\end{proof}
 				
 				\subsection{Proof of Theorem \ref{auxth}}
 				
 				This proof is based on the following Lemma, which is a modified version of Main Theorem A in \cite{bern} for sets of positive reach. The proof follows the same ideas used to prove Main Theorem A, so we will provide a sketch.

 				\begin{lem} \label{th0} Let $M\subset \mathbb{R}^d$ be compact,  such that $\emph{reach}(M)=r_0>0$. Let  $\mathcal{X}_n\subset M$ a set of $n$ points. We are given a graph $G$ on $\mathcal{X}_n$ and positive real numbers $\lambda_1,\lambda_2$. We also refer to positive real numbers $\eps_{min}$, $\eps_{max}$, and $\delta$. Suppose that,
 					\begin{enumerate}
 						\item The graph $G$ contains all edges $xy$ of length $||x-y||\leq \eps_{min}$.
 						\item All edges of $G$ have length $||x-y||\leq \eps_{max}$.
 						\item The set $\mathcal{X}_n$ fulfills $d_H(\mathcal{X}_n,M)\leq \delta$.
 						\item $M$ is geodesically convex.
 						
 						\hspace{-1cm} Then provided that
 						\item $\eps_{max}<2r_0$,
 						\item $\eps_{max}\leq (2/\pi)r_0\sqrt{24}\lambda_1$, 
 						\item $\delta\leq \lambda_2\eps_{min}/4$,
 					\end{enumerate}
 					it follows that inequalities
 					\begin{equation}\label{eqlem}
 						(1-\lambda_1)d_M(x,y)\leq d_{G}(x,y)\leq (1+\lambda_2)d_M(x,y)
 					\end{equation}
 					are valid for all $x,y\in M$.
 				\end{lem}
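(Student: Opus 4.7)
I will follow the strategy of Main Theorem A in \cite{bern} but replace its curvature-based chord/geodesic estimate with the reach-based bound \eqref{reach2}. The two inequalities in \eqref{eqlem} are argued separately: conditions 1, 3, 7 power the upper bound and conditions 2, 5, 6 power the lower bound, with condition 4 providing the geodesics one needs to discretize.

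For the upper bound $d_G(x,y) \leq (1+\lambda_2) d_M(x,y)$, I use condition 4 to fix a minimizing geodesic $\gamma$ in $M$ from $x$ to $y$ of length $L = d_M(x,y)$, parametrize it by arc length, and discretize it into $N$ sub-arcs each of arc length at most $\eps_{min}/2$, with successive endpoints $y_0 = x,\ldots,y_N = y$. Using condition 3 I pick $z_i \in \mathcal{X}_n$ with $\|y_i - z_i\| \leq \delta$ (treating $z_0 = x$, $z_N = y$ as added vertices obeying the same edge rule). The triangle inequality gives $\|z_i - z_{i+1}\| \leq \eps_{min}/2 + 2\delta$, which is at most $\eps_{min}$ by condition 7, so $z_i z_{i+1} \in E(G)$ by condition 1. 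Summing, and using that the sum of chords along $\gamma$ is bounded by its arc length, I get
\begin{equation*}
d_G(x,y) \leq \sum_i \|z_i - z_{i+1}\| \leq L + 2\delta N,
\end{equation*}
and bounding $N$ by $2L/\eps_{min} + O(1)$ and invoking condition 7 again absorbs the error into the factor $1+\lambda_2$.

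For the lower bound $(1-\lambda_1) d_M(x,y) \leq d_G(x,y)$, take a shortest $G$-path $x = x_0,\ldots,x_m = y$, so each $\|x_i - x_{i+1}\| \leq \eps_{max}$ by condition 2 and, by condition 5, strictly less than $2 r_0$. The reach characterization \eqref{reach2} then gives $d_M(x_i, x_{i+1}) \leq 2 r_0 \arcsin\!\bigl(\|x_i - x_{i+1}\|/(2 r_0)\bigr)$. Triangle inequality for $d_M$, together with the monotonicity of $s \mapsto \arcsin(s)/s$ on $(0,1)$ (easily verified by differentiation), yields
\begin{equation*}
d_M(x,y) \leq \frac{\arcsin(s^\ast)}{s^\ast}\, d_G(x,y), \qquad s^\ast := \frac{\eps_{max}}{2 r_0} \in (0,1).
\end{equation*}

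The main obstacle, and essentially the only nontrivial analytic point, is converting this into $\arcsin(s^\ast)/s^\ast \leq 1/(1-\lambda_1)$. This is the elementary inequality $\arcsin(s)(1 - \pi^2 s^2/24) \leq s$ for $s \in [0,1]$, which can be verified by setting $h(s) = s - (1 - \pi^2 s^2/24)\arcsin(s)$, noting $h(0) = 0$ and checking the sign of $h'$ via an explicit computation. Once this inequality is in hand, condition 6 reads exactly $\pi^2 (s^\ast)^2/24 \leq \lambda_1$, which delivers the required $(1-\lambda_1)$ factor. This is also the step that explains the appearance of the constant $1/24$ inside $\gamma_n = 1 - (\pi \eps_n/(2 r_0))^2/24$ in Theorem \ref{auxth}, and pins down the precise calibration between $\eps_{max}$, $r_0$, and $\lambda_1$ assumed in the hypotheses.
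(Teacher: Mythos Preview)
Your proposal is correct and follows essentially the same route as the paper: for the upper bound you reproduce Bernstein's geodesic-discretization argument (the paper simply cites Theorem~2 in \cite{bern}), and for the lower bound you bound each edge's geodesic length via the reach characterization \eqref{reach2} and sum. The only cosmetic difference is that the paper rewrites \eqref{reach2} as the sine inequality $2r_0\sin(d_M/(2r_0))\le\|x-y\|$ and then invokes Corollary~4 of \cite{bern} (whose proof is the Taylor bound $\sin t\ge t-t^3/6$), whereas you stay in the arcsine formulation and prove the equivalent inequality $\arcsin(s)(1-\pi^2 s^2/24)\le s$ directly; both routes yield the same $(1-\lambda_1)$ factor and explain the constant $1/24$.
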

 				
 				\begin{proof} The second inequality in \eqref{eqlem} is proven  as the second inequality in Main Theorem A in \cite{bern}, which is based on Theorem 2 in \cite{bern}. The proof of Theorem 2 in \cite{bern} does not require smoothness assumptions on $M$. To prove the first inequality, let us consider a path $x_0x_1\dots x_p$ on $G$ connecting $x$ and $y$ (i.e., $x_0=x$ and $x_p=y$.) From conditions 2, 5 and 7 $||x_i-x_{i+1}||\leq 2r_0$ and $||x_i-x_{i+1}||\leq (2/\pi)r_0\sqrt{24}\lambda_1$. From \eqref{reach2}, we get that $2r_0\sin(d_M(x,y)/2r_0)\leq ||x-y||$, for all $x,y$ such that $||x-y||\leq 2r_0$ (this is Lemma 3 in \cite{bern}). Thus
 					$$d_M(x_i,x_{i+1})\leq (\pi/2)||x_{i}-x_{i+1}||\leq r_0\sqrt{24}\lambda_1.$$
 					We can then apply Corollary 4 in \cite{bern} with $\lambda=\lambda_1$ (its proof depends only on Lemma 3). Then,
 					$d_M(x_i,x_{i+1})\leq (1-\lambda_1^{-1})||x_i-x_{i+1}||$. Lastly,
 					$$d_M(x,y)\leq (1-\lambda)^{-1}||x_0+x_1||+\dots+(1-\lambda)^{-1}||x_{p-1}-x_p||=(1-\lambda_1)^{-1}d_G(x,y).$$
 				\end{proof}
 				
 				\noindent \textbf{Proof of Theorem \ref{auxth}}	
 				
 				\begin{proof}
 					Let us verify condition 1 to 6 in Lemma \ref{th0}. By construction of $G_n$, condition 1 and 2 are guaranteed with  $\eps_{max}=\eps_{min}=\eps_n$. Also $\delta_n=d_H(\mathcal{X}_n,M)$, guarantee condition 3.  Conditions 4 and 5 are fulfilled by hypotheses. Lastly, let us define 
 					$$\lambda_1:=\frac{1}{24}\Big(\frac{\pi \eps_n}{2r_0}\Big)^2\quad \text{ and } \quad \lambda_2:=4\tau_n,$$
 					then, conditions 6 and 7 are fulfilled, from where it follows \eqref{auxeq1}.
 				\end{proof}
 				
 				\section*{Acknowledgements} This research has been partially supported by  grant FCE-1-2019-1-156054, ANII, Uruguay.
 				The constructive comments and criticisms from two anonymous referees are gratefully acknowledged.


\begin{thebibliography}{4}
 					\bibitem[\protect\citeauthoryear{Aamari et al}{2021}]{aa21}
 					\rm{Aamari, E., Aaron, C., and Levrard, C.}(2021)
 					Minimax Boundary Estimation and Estimation with Boundary.
 					\url{https://arxiv.org/abs/2108.03135}
 					
 					
 					\bibitem[\protect\citeauthoryear{Aamari et al}{2019}]{aa19}
 					\rm{Aamari E., Kim, J., Chazal, F., Michel, B., Rinaldo, A., and Wasserman, L.}(2019). 
 					Estimating the reach of a manifold. 
 					\textit{Electronic Journal of Statistics}, 13(1), 1359-1399.
 					
 					\bibitem[\protect\citeauthoryear{Aamari and Levrard}{2019}]{aa19_2}
 					\rm{Aamari, E. and Levrard, C.} (2019). 
 					Nonasymptotic rates for manifold, tangent space and curvature estimation. 
 					\textit{The Annals of Statistics}, 47(1), 177-204.
 					
 					\bibitem[\protect\citeauthoryear{Aamari and Levrard}{2018}]{aa18}
 					\rm{Aamari, E. and Levrard, C.} (2018). 
 					Stability and minimax optimality of tangential Delaunay complexes for manifold reconstruction.
 					\textit{Discrete \& Computational Geometry}, 59(4), 923--971.
 					
 					
 					\bibitem[\protect\citeauthoryear{Aamari el al}{2022}]{aa22}
 					\rm{Aamari, E, Berenfeld, C. and Levrard, C.} (2022). 
 					Optimal reach estimation and metric Learning.
 					\url{https://arxiv.org/abs/2207.06074} 
 					
 					
 					\bibitem[\protect\citeauthoryear{Aaron and Cholaquidis}{2020}]{aa20} 
 					\rm{Aaron, C. and Cholaquidis, A.}(2020).
 					On boundary detection. 
 					\textit{In Annales de l'Institut Henri Poincaré, Probabilités et Statistiques,}  56(3), 2028--2050.  
 					
 					\bibitem[\protect\citeauthoryear{Aaron, et al}{2017}]{aa17}
 					\rm{Aaron, C., Cholaquidis, A., and Cuevas, A.} (2017). 
 					Detection of low dimensionality and data denoising via set estimation techniques. 
 					\textit{Electronic Journal of Statistics}, 11(2), 4596-4628.
 					
 					
 					\bibitem[\protect\citeauthoryear{Ambrosio, Colesanti and Villa}{2008}]{amb08}
 					\rm{Ambrosio, L., Colesanti, A. and Villa, E.} (2008). 
 					Outer Minkowski content for some classes of closed sets. 
 					\textit{Mathematische Annalen,}  342, 727--748.
 					
 					\bibitem[\protect\citeauthoryear{Arias-Castro et al}{2020}]{arias2020} 
 					\rm{Arias-Castro, E., Javanmard, A. and Pelletier, B.}(2020).
 					Perturbation bounds for procrustes, classical scaling, and trilateration, with applications to manifold learning.
 					\textit{Journal of Machine Learning Research,}  21, 1--37.  
 					
 					\bibitem[\protect\citeauthoryear{Arias-Castro et al}{2019}]{arias2019} 
 					\rm{Arias-Castro, E., Pateiro-L{\'o}pez, B. and Rodr{\'\i}guez-Casal, A.}(2019).
 					Minimax estimation of the volume of a set under the rolling ball condition.
 					\textit{Journal of the American Statistical Association,} 114(527), 1162--1173.  
 					
 					\bibitem[\protect\citeauthoryear{Berenfeld et al}{2022}]{bere22}
 					\rm{Berenfeld, C., Harvey, J., Hoffmann, M., and Shankar, K.} (2022). 
 					Estimating the reach of a manifold via its convexity defect function. 
 					\textit{Discrete and Computational Geometry,} 67(2), 403-438.
 					
 					\bibitem[\protect\citeauthoryear{Bernstein et al}{2000}]{bern}
 					\rm{Bernstein, M., De Silva, V., Langford, J. C., and Tenenbaum, J. B.} (2000). 
 					Graph approximations to geodesics on embedded manifolds.
 					\textit{Technical report, Department of Psychology, Stanford University,} 961--968.
 					
 					\bibitem[\protect\citeauthoryear{Boissonnat et al}{2019}]{boi}
 					{Boissonnat, J. D., Lieutier, A., and Wintraecken, M.} (2019). 
 					The reach, metric distortion, geodesic convexity and the variation of tangent spaces. 
 					\textit{Journal of Applied and Computational Topology}, 3(1), 29--58.
 					
 					
 					\bibitem[\protect\citeauthoryear{Cadre, B.}{2006}]{cad06}
 					\rm{Cadre, B}(2006)
 					Kernel estimation of density level sets.
 					\textit{Journal of Multivariate Analysis}, 97(4), 999-1023.
 					
 					\bibitem[\protect\citeauthoryear{Cholaquidis et al.}{2014}]{chola:14}
 					{\rm Cholaquidis, A., Cuevas, A.  and Fraiman, R.} (2014)
 					On Poincar\'{e} cone property.
 					\emph{The Annals of Statistics}, 42, 255--284.
 					
 					
 					\bibitem[\protect\citeauthoryear{Cuevas, Fraiman and Pateiro-L\'opez}{2016}]{chola16}
 					\rm{Cholaquidis, A. and Fraiman, R. and Lugosi, G. and Pateiro-L{\'o}pez, B.} (2016).
 					Set estimation from reflected Brownian motion.
 					\textit{Journal of the Royal Statistical Society: Series B (Statistical Methodology)},  78(5), 1057--1078
 					
 					
 					
 					\bibitem[\protect\citeauthoryear{Cuevas and Rodriguez-Casal}{2004}]{crc:04}
 					\rm{Cuevas, A. and Rodriguez-Casal, A.}(2004)
 					On boundary estimation.
 					\textit{Advances in Applied Probability,} 36, 340--354.
 					
 					
 					\bibitem[\protect\citeauthoryear{Cuevas, Fraiman and Pateiro-L\'opez}{2012}]{cue12}
 					\rm{Cuevas, A., Fraiman, R. and Pateiro-L\'opez, B.} (2012).
 					On statistical properties of sets fulfilling rolling-type conditions.
 					\textit{Advances in Applied Probability,}  44,  311--329.
 					
 					\bibitem[\protect\citeauthoryear{Cuevas et al}{2006}]{cgmrc06}
 					\rm{Cuevas, A., González‐Manteiga, W., and Rodríguez‐Casal, A.}(2006). 
 					Plug‐in estimation of general level sets. 
 					\textit{Australian New Zealand Journal of Statistics}, 48(1), 7-19.
 					
 					
 					\bibitem[\protect\citeauthoryear{Cuevas and Fraiman}{1997}]{cf97}
 					\rm{Cuevas, A. and Fraiman, R.} (1997).
 					A plug-in approach to support estimation.
 					\textit{The Annals of Statistics,} 25, 2300-2312.
 					
 					\bibitem[\protect\citeauthoryear{Cuevas et al}{2013}]{cue13}
 					\rm{Cuevas, A., Fraiman, R. and Gy\"orfi, L.} (2013).
 					Towards a universally consistent estimator of the Minkowski content. 
 					\textit{ESAIM: Probability and Statistics}, 17, 359--369.
 					
 					
 					
 					\bibitem[\protect\citeauthoryear{Cuevas et al}{2007}]{cfrc07}
 					\rm{Cuevas, A., Fraiman, R., and Rodríguez-Casal, A.} (2007). 
 					A nonparametric approach to the estimation of lengths and surface areas.
 					\textit{The Annals of Statistics,} 35(3), 1031-1051.
 					
 					
 					
 					
 					\bibitem[\protect\citeauthoryear{Devroye and Wise}{1980}]{dw:80}
 					{\rm Devroye, L. and Wise, G. } (1980)
 					Detection of abnormal behaviour via nonparametric estimation of the support.
 					\textit{SIAM Journal on Applied Mathematics,} 3, 480--488.
 					
 					\bibitem[\protect\citeauthoryear{Devroye}{1983}]{luc}
 					{\rm Devroye, L.} (1983)
 					On arbitrarily slow rates of global convergence in density estimation. 
 					\textit{Zeitschrift für Wahrscheinlichkeitstheorie und verwandte Gebiete}, 62(4), 475--483.
 					
 					
 					\bibitem[\protect\citeauthoryear{Donoho}{1988}]{Don88}
 					\rm{Donoho, D. L.}(1988)
 					One-Sided Inference about Functionals of a Density.
 					\textit{The Annals of Statistics}
 					16(4), 1390--1420.	
 					
 					
 					\bibitem[\protect\citeauthoryear{Federer}{1959}]{federer:59}    
 					{\rm Federer, H.} (1959)
 					Curvature measures. 
 					\emph{Transactions of the American Mathematical Society}, 93, 418--491. 
 					
 					
 					\bibitem[\protect\citeauthoryear{Federer}{1969}]{fed:69}
 					\rm{Federer, H.} (1969).
 					Geometric measure theory.
 					\textit{Springer}.
 					
 					\bibitem[\protect\citeauthoryear{Fefferman et al.}{2016}]{fef16}
 					Fefferman, C., Mitter, S. and Narayanan, H. 
 					Testing the manifold hypothesis.
 					\textit{Journal of the American Mathematical Society,} 29(4), 983--1049.
 					
 					
 					\bibitem[\protect\citeauthoryear{Fraiman and Meloche}{1999}]{fraimeloche}
 					Fraiman, R. and Meloche, J. (1999). 
 					Counting bumps. 
 					\textit{Annals of the Institute of Statistical Mathematics}, 51(3), 541--569.
 					
 					\bibitem[\protect\citeauthoryear{Fu}{1989}]{fu}
 					{\rm Fu, J.H.G} (1989)
 					Curvature measures and generalized Morse theory.
 					\textit{Journal of Differential Geometry,}  30(3), 619--642.  
 					
 					
 					
 					\bibitem[\protect\citeauthoryear{Genovese et al.}{2012a}]{gen12a}
 					Genovese, C.R., Perone-Pacifico, M., Verdinelli, I. and Wasserman, L. (2012a).
 					The geometry of nonparametric filament estimation. 
 					\textit{Journal of the American Statistical Association,} 107, 788--799.
 					
 					\bibitem[\protect\citeauthoryear{Genovese et al.}{2012b}]{gen12b}
 					Genovese, C.R., Perone-Pacifico, M., Verdinelli, I. and Wasserman, L. (2012b).
 					Minimax manifold estimation.
 					\textit{Journal of Machine Learning Research,} 13, 1263--1291.
 					
 					
 					\bibitem[\protect\citeauthoryear{Horobe{\c{t}} et al.}{2019}]{ho19}
 					Horobe{\c{t}}, Emil and Weinstein, Madeleine (2019).
 					Offset hypersurfaces and persistent homology of algebraic varieties.
 					\textit{Computer Aided Geometric Design,} 74, 101767.
 					
 					
 					\bibitem[\protect\citeauthoryear{Le Cam and Schwartz}{1960}]{lecam}
 					Le Cam, L. and Schwartz, L (1960). 
 					A necesary and sufficient condition for the existence of consistent estimates.  
 					\textit{Annals of Mathematical Statistics,} 31, 140--150. 
 					
 					\bibitem[\protect\citeauthoryear{Niyogi et al.}{2008}]{sma08}
 					\rm{Niyogi, P., Smale, S., and Weinberger, S.}(2008)
 					Finding the homology of submanifolds with high confidence from random samples.
 					\textit{Discrete \& Computational Geometry,} 39, 419--441.
 					
 					
 					\bibitem[\protect\citeauthoryear{Pateiro-L\'opez and Rodr\'iguez-Casal}{2012}]{pateiro:09}
 					{\rm Pateiro-L\'opez, B. and Rodr\'iguez-Casal, A.} (2009)
 					Surface area estimation under convexity type assumptions.
 					\emph{Journal of Nonparametric Statistics,} 21(6), 729--741
 					
 					\bibitem[\protect\citeauthoryear{Pateiro-Lopez}{2008}]{beatesis}
 					{\rm Pateiro-L\'opez, B.} (2008). 
 					Set estimation under convexity type restrictions. 
 					Universidade de Santiago de Compostela.
 					
 					
 					\bibitem[\protect\citeauthoryear{Rataj and  Zaj{\'\i}{\v{c}}ek}{2017}]{rataj17}
 					\rm{Rataj, J. and Zaj{\'\i}{\v{c}}ek, L.}
 					On the structure of sets with positive reach. (2017)
 					\textit{Mathematische Nachrichten}, 290(11-12), 1806--1829.
 					
 					\bibitem[\protect\citeauthoryear{Rataj and Z{\"a}hle}{2001}]{rataj01}
 					\rm{Rataj, J., and Z\"ahle, M.} (2001). 
 					Curvatures and currents for unions of sets with positive reach, II. 
 					\textit{Annals of Global Analysis and Geometry}, 20(1), 1-21.
 					
 					
 					\bibitem[\protect\citeauthoryear{Rodríguez-Casal}{2007}]{cas07}
 					\rm{Rodriguez Casal, A.R.} (2007)
 					Set estimation under convexity type assumption.
 					\textit{Annales de l'Institut Henri Poincaré}, 43, 763--774.
 					
 					\bibitem[\protect\citeauthoryear{Rodríguez-Casal and Saavedra-Nieves}{2021}]{cas21}
 					\rm{Rodríguez-Casal, A., and Saavedra-Nieves, P.} (2021). 
 					Spatial distribution of invasive species: an extent of occurrence approach. 
 					\textit{TEST}, 31, 416--441. 
 					
 					
 					\bibitem[\protect\citeauthoryear{Rodríguez-Casal and Saavedra-Nieves}{2016}]{cas16}
 					\rm{Rodríguez-Casal, A., and Saavedra-Nieves, P.} (2021). 
 					A fully data-driven method for estimating the shape of a point cloud.
 					\textit{ESAIM}, 20, 332-348. 
 					
 					
 					
 					\bibitem[\protect\citeauthoryear{Th\"ale}{2008}]{tha08}
 					Th\"ale, C. (2008). 50 years sets with positive reach. A survey. 
 					\textit{Surveys in Mathematics and its Applications,} 3, 123--165.
 					
 					
 					\bibitem[\protect\citeauthoryear{Walther, G.}{1999}]{wal99}
 					\rm{Walther, G.} (1999).
 					On a generalization of Blaschke's rolling theorem
 					and the smoothing of surfaces,
 					\textit{Mathematical Methods in the Applied Sciences}, 22, 301--316.
 					
 					
 					
 				\end{thebibliography}
 			\end{document}